\numberwithin{equation}{section}
\newtheorem{theorem}{Theorem}
\newtheorem{lemma}[theorem]{Lemma}
\newtheorem{example}{Example}
\DeclareMathAlphabet{\mathcalligra}{T1}{calligra}{m}{n}
\title [Ideal Containments under flat extensions]
{Ideal containments under flat extensions}
\author{Solomon Akesseh}
\begin{document}

\begin{abstract}
Let $\varphi : S =  k[y_0,..., y_n] \to R =  k[y_0,...,y_n]$ be given by $y_i \to f_i$ where $f_0,...,f_n$ is an $R$-regular sequence of homogeneous elements of the same degree. A recent paper shows for ideals, $I_\Delta \subseteq S$, of matroids, $\Delta$,  that $I_\Delta^{(m)} \subseteq I^r$ if and only if $\varphi_*(I_\Delta)^{(m)} \subseteq \varphi_*(I_\Delta)^r$ where $\varphi_*(I_\Delta)$ is the ideal generated in $R$ by $\varphi(I_\Delta)$. We prove this result for saturated homogeneous ideals $I$ of configurations of points in $\mathbb{P}^n$ and use it to obtain many new counterexamples to $I^{(rn - n + 1)} \subseteq I^r$ from previously known counterexamples.  
\end{abstract}
\maketitle

\section{Introduction}

\noindent Let $R$ be a commutative Noetherian domain. Let $I$ be an ideal in $R$. We define the $m$th symbolic power of $I$ to be the ideal $$ I^{(m)} = R \cap \bigcap_{P \in Ass_R(I)} I^mR_P \subseteq R_{(0)}.$$ In this note we shall be interested in symbolic powers of homogeneous ideals of $0$-dimensional subschemes in $\mathbb{P}^n$. In the case that the subscheme is reduced, the definition of the symbolic power takes a rather simple form by a theorem of Zariski and Nagata \cite{Eisenbud} and does not require passing to the localizations at various associated primes. Let $I \subseteq k[\mathbb{P}^n]$ be a homogeneous ideal of reduced points, $p_1,...,p_l$, in $\mathbb{P}^n$ with $k$ a field of any characteristic. Then $I = I(p_1) \cap \cdot \cdot \cdot \cap I(p_l)$ where $I(p_i) \subseteq k[\mathbb{P}^n]$ is the ideal generated by all forms vanishing at $p_i$, and the $m$th symbolic power of $I$ is simply $I^{(m)} = I(p_1)^m \cap \cdot \cdot \cdot \cap I(p_l)^m$. 
\\
\\
In \cite{ELS}, Ein, Lazarsfeld and Smith proved that if $I \subseteq k[\mathbb{P}^n]$ is the radical ideal of a $0$-dimensional subscheme of $\mathbb{P}^n$, where $k$ is an algebraically closed field of characteristic $0$, then $I^{(mr)} \subseteq (I^{(r +  1 - n)})^m$ for all $m \in \mathbb{N}$ and $r \geq n$. Letting $r = n$, we get that $I^{(mn)} \subseteq I^m$ for all $m \in \mathbb{N}$. Hochster and Huneke in \cite{HoHu} extended this result to all ideals $I \subseteq k[\mathbb{P}^n]$ over any field $k$ of arbitrary characteristic. 
\\
\\
In \cite{BH1} Bocci and Harbourne introduced a quantity $\rho(I)$, called the resurgence, associated to a nontrivial homogeneous ideal I in $k[\mathbb{P}^n]$, defined to be $\sup \{s/t : I^{(s)} \not \subseteq I^t \}$. It is seen immediately that if $\rho(I)$ exists, then for $s > \rho(I)t$, $I^{(s)} \subseteq I^t$. The results of \cite{ELS, HoHu} guarantee that $\rho(I)$ exists since $I^{(mn)} \subseteq I^m$ implies that $\rho(I) \leq n$ for an ideal $I$ in $k[\mathbb{P}^n]$. For an ideal $I$ of points in $\mathbb{P}^2$, $I^{(mn)} \subseteq I^m$ gives $I^{(4)} \subseteq I^2$. According to \cite{BH1} Huneke asked if $I^{(3)} \subseteq I^2$ for a homogeneous ideal $I$ of points in $\mathbb{P}^2$. More generally Harbourne conjectured in \cite{Primer} that if $I \subseteq k[\mathbb{P}^n]$ is a homogeneous ideal, then $I^{(rn - (n-1))} \subseteq I^r$ for all $r$. This led to the conjectures by Harbourne and Huneke in \cite{HaHu} for ideals I of points that $I^{(mn - n +1)} \subseteq \mathfrak{m}^{(m-1)(n-1)}I^m$ and $I^{(mn)} \subseteq \mathfrak{m}^{m(n-1)}I^m$ for $m \in \mathbb{N}$. 
\\
\\
The second conjecture remains open. Cooper, Embree, Ha and Hoeful give a counterexample in \cite{CEHH} to the first for $n = 2 = m$ for a homogeneous ideal $I \subseteq k[\mathbb{P}^2]$. The ideal $I$ in this case is $I = (xy^2, yz^2, zx^2, xyz) = (x^2, y) \cap (y^2, z) \cap (z^2, x)$ whose zero locus in $\mathbb{P}^2$ is the $3$ coordinate vertices of $\mathbb{P}^2$, $[0:0:1]$, $[0:1:0]$ and $[1:0:0]$ together with $3$ infinitely near points, one at each of  the vertices, for a total of $6$ points. Clearly the monomial $x^2y^2z^2 \in  (x^2, y)^3 \cap (y^2, z)^3 \cap (z^2, x)^3$ so $x^2y^2z^2$ is in $I^{(3)}$. Note $xyz \in I$ so $x^2y^2z^2 \in I^2$, but $x^2y^2z^2 \notin \mathfrak{m}I^2$. 
\\
\\
Shortly thereafter a counterexample to the containment $I^{(3)} \not\subseteq I^2$ was given by Dumnicki, Szemberg and Tutaj-Gasinska in \cite{DST}. In this case $I$ is the ideal of the $12$ points dual to the $12$ lines of the Hesse configuration. The Hesse configuration consists of the $9$ flex points of a smooth cubic and the $12$ lines through pairs of flexes. Thus $I$ defines $12$ points lying on $9$ lines. Each of the lines goes through $4$ of the points, and each point has $3$ of the lines going through it. Specifically $I$ is the saturated radical homogeneous ideal $I = (x(y^3 - z^3), y(x^3 - z^3), z(x^3 - y^3)) \subset \mathbb{C}[\mathbb{P}^2]$. Its zero locus is the $3$ coordinate vertices of $\mathbb{P}^2$ together with the $9$ intersection points of any $2$ of the forms $x^3 - y^3$, $x^3 - z^3$ and $y^3 - z^3$. The form $F = (x^3 - y^3)(x^3 - z^3)(y^3 - z^3)$ defining the $9$ lines belongs to $I^{(3)}$  since for each point in the configuration, $3$ of the lines in the zero locus of $F$ pass through the point, but $F \notin I^2$ and hence $I^{(3)} \not \subseteq I^2$. (Of course this also means that $I^{(3)} \not \subseteq \mathfrak{m}I^2$.) More generally, $I = (x(y^n - z^n), y(x^n - z^n), z(x^n - y^n))$ defines a configuration of $n^2 + 3$ points called a Fermat configuration \cite{Artebani}. For $n \geq 3$, we again have $I^{(3)} \not \subseteq I^2$ \cite{Harbourne-Seceleanu15, Seceleanu} over any field of characteristic not $2$ or $3$ containing $n$ distinct $n$th roots of $1$. 
\\
\\
Subsequent counterexamples to $I^3 \subseteq I^2$  were given in \cite{BCH}, \cite{BN}, \cite{Harbourne-Seceleanu15}, \cite{Czaplinski} and  \cite{Seceleanu} including related counterexamples to $I^{(nr-n+1)} \subseteq I^r$ for ideals of points in $\mathbb{P}^n$ in positive characterisitc given in \cite{Harbourne-Seceleanu15}. All of the counterexamples to $I^3 \subseteq I^2$ are ideals of points where the points are singular points of multiplicity at least $3$ of a configuration of lines. By considering flat morphisms $\mathbb{P}^n \to \mathbb{P}^n$, we obtain many new counterexamples to $I^{(rn - n +1)} \subseteq I^r$, taking $I$ to be the ideal of the fibers over the points of previously known counterexamples. 
\\
\\
The idea for this comes from \cite{Geramita15}. Suppose $\Delta$ is a matroid on $[s] = \{1,...,s\}$ of dimension $s-1-c$ and and let $f_1,...,f_s \in R = k[y_0,...,y_n]$ be homogeneous polynomials that form an R-regular sequence, $n \geq c$. Suppose now that $\varphi : S = k[y_1,..., y_s] \to R$ is a $k$-algebra map defined by $y_i \to f_i$. Then \cite{Geramita15} shows that if $I_\Delta \subseteq S$ is the ideal of the matroid and $m$ and $r$ are positive integers, then $I_\Delta^{(m)} \subseteq I_\Delta^r$ if and only if $\varphi_*(I_\Delta)^{(m)} \subseteq \varphi_*(I_\Delta)^r$ where $\varphi_*(I_\Delta)$ denotes the ideal generated by $\varphi(I_\Delta)$ in $R$. Of course a natural question is whether $I^{(m)} \subseteq I^r$ if and only if $\varphi_*(I)^{(m)} \subseteq \varphi_*(I)^r$ for any saturated homogeneous ideal. The current note answers this question in the affirmative for ideals $I$ of points in $\mathbb{P}^n$, relying on the ideas in \cite{Geramita15}.
\\
\\
I would like to thank Brian Harbourne, my adviser, for suggesting the idea of this note and providing guidance throughout its writing. I would like to thank Tom Marley, Alexandra Seceleanu and Mark Walker for helpful conversations.

\section{Results}

\noindent Throughout this note, let $R = S = k[y_0,...,y_n]$ and let $\{f_0,...,f_n\} \subseteq R$ be an R-regular sequence of homogeneous elements of $R$ of the same degree. Let $\varphi : S \to R$ be the $k$-algebra map given by $y_i \mapsto f_i$. For an ideal $I \subseteq S$, let $\varphi_*(I) \subseteq R$ denote the ideal generated by $\varphi(I)$. 

\begin{lemma}
Let $\varphi : S \to R$ be as above. Then $R$ is a free graded $S$-module, hence $R$ is faithfully flat as an $S$-module. 
\end{lemma}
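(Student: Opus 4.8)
The plan is to realize $R$ as a finitely generated $S$-module and then invoke the local criterion for flatness (``miracle flatness''). The structural fact driving everything is that $f_0,\dots,f_n$ is not merely a regular sequence but a homogeneous system of parameters for $R$: since $R$ is Cohen--Macaulay of dimension $n+1$ and $f_0,\dots,f_n$ is an $R$-regular sequence of length $n+1$, the ideal $\mathfrak{a} := (f_0,\dots,f_n)R$ has height $n+1$, so $\dim R/\mathfrak{a} = 0$. As $R/\mathfrak{a}$ is a graded Artinian ring with degree-zero part $k$, it is finite-dimensional over $k$.

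Next I would put on $S$ the grading in which $\deg y_i = \deg f_i =: d$, so that $\varphi$ becomes a homomorphism of graded rings and $\mathfrak{m}_S R = \mathfrak{a}$, where $\mathfrak{m}_S = (y_0,\dots,y_n)$ is the maximal homogeneous ideal. Lifting a $k$-basis of $R/\mathfrak{m}_S R$ to homogeneous elements $g_1,\dots,g_t \in R$, the graded Nakayama lemma shows that $g_1,\dots,g_t$ generate $R$ as an $S$-module; thus $R$ is a finitely generated graded $S$-module. It is also faithful: $\varphi$ is injective because its kernel is a prime of $S$ (as $R$ is a domain) which, were it nonzero, would force $n+1 = \dim R = \dim S/\ker\varphi \le n$, since $R$ is module-finite over $S/\ker\varphi$. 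So $S \hookrightarrow R$ is a module-finite extension of graded rings, both of dimension $n+1$.

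Finally, I would apply the graded local criterion of flatness to $S \to R$: the source is regular, the target is Cohen--Macaulay, and $\dim R = n+1 = \dim S + \dim R/\mathfrak{m}_S R$, whence $R$ is flat over $S$. A finitely generated graded flat module over a Noetherian graded ring with degree-zero part a field is free (it is projective, and finitely generated graded projectives over such rings are free, via a minimal graded free resolution). Hence $R$ is a free graded $S$-module, necessarily of positive rank, so it is faithful, and a faithful flat module is faithfully flat. An alternative to the last step: $f_0,\dots,f_n \in \mathfrak{m}_S R$ form an $R$-regular sequence, so $\operatorname{depth}_{\mathfrak{m}_S} R = n+1 = \dim_S R$, i.e.\ $R$ is a maximal Cohen--Macaulay $S$-module, and then $\operatorname{pd}_S R = \operatorname{depth} S - \operatorname{depth}_S R = 0$ by Auslander--Buchsbaum.

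I do not anticipate a serious obstacle; the only care needed is bookkeeping with the grading so that the graded versions of Nakayama's lemma, miracle flatness, and (in the alternative) the Auslander--Buchsbaum formula apply over the graded-local ring $(S,\mathfrak{m}_S)$, and checking that the depth and Cohen--Macaulay properties of $R$ as a ring transfer to $R$ regarded as an $S$-module --- both of which are standard.
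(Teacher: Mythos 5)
Your argument is correct, and its skeleton matches the paper's: both proofs rest on the observation that $f_0,\dots,f_n$, being a maximal regular sequence in the Cohen--Macaulay ring $R$, is a homogeneous system of parameters, so that $R$ is a finitely generated graded module over $S\cong k[f_0,\dots,f_n]$, and then exploit the fact that source and target are both Cohen--Macaulay of the same dimension. The only real divergence is the final step: the paper goes straight to the graded Auslander--Buchsbaum formula ($\operatorname{pd}_S R+\operatorname{depth}R=\operatorname{depth}S$, so $\operatorname{pd}_S R=0$ and the minimal graded free resolution shows $R$ is free), whereas your primary route is miracle flatness ($S$ regular, $R$ Cohen--Macaulay, $\dim R=\dim S+\dim R/\mathfrak m_S R$) followed by ``finitely generated graded flat over a graded ring with degree-zero part $k$ implies free.'' These are interchangeable here, and indeed the alternative you sketch at the end is precisely the paper's proof. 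Two minor points of comparison: you supply a direct height/Nakayama argument for module-finiteness where the paper cites Bruns--Herzog~1.5.17, and you justify injectivity of $\varphi$ by a dimension count on $\ker\varphi$ where the paper simply asserts it from the regular-sequence hypothesis --- your version is the more self-contained of the two. The caveats you flag (graded versions of the local criteria, and $\operatorname{depth}_{\mathfrak m_S}R=\operatorname{depth}_{\mathfrak m_R}R$ for the module-finite extension) are exactly the points the paper glosses over, and they are standard, so there is no gap.
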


\begin{proof}
It suffices to show that $R$ is free over $S$ since free modules are faithfully flat modules. Note that $\varphi$ is injective since $\{f_0,...,f_n\}$ is a regular sequence. It follows that $S \cong k[f_0,...,f_n] \subseteq R$. So we identify $S$ with $k[f_0,...,f_n]$ and show that $R$ is free over $k[f_0,...,f_n]$. Since $\{f_0,...,f_n\}$ is a maximal homogeneous $R$-regular sequence, it is a homogeneous system of parameters (sop). The reason is that every regular sequence is part of an sop and because $R$ is Cohen-Macaulay (CM), every sop is a regular sequence ($\text{depth} R = \dim R$) and so if $\{f_0,...,f_n\}$ is a maximal regular sequence, then it is an sop. Since $R = k[\mathbb{P}^n]$ is a positively graded affine $k$-algebra, the fact that $\{f_0,...,f_n\}$ is a homogeneous sop is equivalent to $R$ being a finite $S$-module by \cite[Theorem 1.5.17]{Bruns-Herzog}. Since both $R$ and $S$ are CM, $\text{depth} R = \dim R = n+1 = \dim S =  \text{depth} S$. By the Auslander-Buchsbaum formula \cite[Exercise 19.8]{Eisenbud} \cite[Theorem 15.3]{Peeva}, $\text{pd}_S R + \text{depth } R = \text{depth } S$. It follows that $\text{pd}_S R = 0$. So looking at the minimal free resolution of $R$ as an $S$-module, we see that $R$ is a free $S$-module. Therefore $R$ is a faithfully flat $S$-module.
\end{proof}

\begin{lemma}
Let $I \subseteq S$ be a homogeneous saturated  ideal defining a $0$-dimensional subscheme of $\mathbb{P}^n$. Then $\varphi_*(I) \subseteq R$ also defines a $0$-dimensional subscheme of $\mathbb{P}^n$. 
\end{lemma}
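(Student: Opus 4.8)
The plan is to translate the geometric statement into a statement about Krull dimensions of homogeneous coordinate rings and then track how dimension behaves under $\varphi$. Recall that $\varphi_*(I)$ is just the extension ideal $IR$, where we view $R$ as an $S$-module (equivalently, identify $S$ with $k[f_0,\dots,f_n]\subseteq R$ as in the proof of the preceding lemma). Saying that a saturated homogeneous ideal $J\subseteq R$ defines a $0$-dimensional subscheme of $\mathbb{P}^n$ is the same as saying that its affine cone has Krull dimension $1$, i.e. $\dim(R/J)=1$ (or $J=R$, the empty case). So the goal reduces to showing $\dim(R/IR)=\dim(S/I)$.

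First I would invoke the preceding lemma together with what its proof establishes: $R$ is a free, hence faithfully flat, $S$-module, and moreover $R$ is a \emph{finite} $S$-module (via \cite[Theorem 1.5.17]{Bruns-Herzog}). Base changing along $S\to S/I$, the ring $R/IR=R\otimes_S(S/I)$ is therefore both faithfully flat over $S/I$ and module-finite over $S/I$. Faithful flatness gives $IR\cap S=I$, so the induced homomorphism $S/I\hookrightarrow R/IR$ is injective; module-finiteness makes it an integral extension of rings.

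Then I would apply the standard fact that an injective integral ring extension $A\hookrightarrow B$ satisfies $\dim B=\dim A$: the inequality $\dim B\le\dim A$ is integrality (incomparability), and the reverse inequality follows from lying-over together with going-up. Applied with $A=S/I$ and $B=R/IR$ this yields $\dim(R/IR)=\dim(S/I)$. Since $I$ is saturated and defines a nonempty $0$-dimensional subscheme of $\mathbb{P}^n$, we have $\dim(S/I)=1$, hence $\dim(R/IR)=1$, which is exactly the assertion that $\varphi_*(I)=IR$ defines a $0$-dimensional subscheme of $\mathbb{P}^n$. In the degenerate case where the subscheme of $I$ is empty, $I$ saturated forces $I=S$, so $\varphi_*(I)=R$ and the conclusion holds trivially.

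The proof is short once the preceding lemma is available; the only genuine content is the geometric fact, repackaged algebraically, that a finite morphism $\mathbb{P}^n\to\mathbb{P}^n$ has finite fibers, so the one point I would check carefully is that both finiteness and flatness of $R$ over $S$ survive base change to $S/I$ and that the dimension equality for integral extensions applies verbatim. Alternatively, one can argue purely geometrically: the $f_i$ have no common zero in $\mathbb{P}^n$ (they form a regular sequence of length $n+1$ in the $(n+1)$-dimensional ring $R$), so $\varphi$ induces a genuine finite morphism $\pi:\mathbb{P}^n\to\mathbb{P}^n$, and $V(\varphi_*(I))$ is the scheme-theoretic preimage $\pi^{-1}\bigl(V(I)\bigr)$, which is finite because $\pi$ has finite fibers and $V(I)$ is a finite set of points.
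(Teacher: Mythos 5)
Your proof is correct, but it takes a genuinely different route from the paper's. The paper proves $\dim(S/I)=\dim(R/\varphi_*(I))$ homologically: it applies the graded Auslander--Buchsbaum formula to both $S/I$ and $R/\varphi_*(I)$, invokes Proposition 3.1 of \cite{Geramita15} to say the two modules have the same graded Betti numbers (hence equal projective dimension, hence equal depth) and that Cohen--Macaulayness transfers, uses the hypothesis that $I$ is a saturated ideal of points to get $S/I$ Cohen--Macaulay, and then converts depth back into dimension; it closes with an unmixedness argument showing $\operatorname{Ass}(R/\varphi_*(I))$ consists of ideals of points. You instead exploit only what Lemma~1's proof already gives --- that $R$ is a \emph{finite} faithfully flat $S$-module --- base change to $S/I$, and apply the classical dimension theory of injective integral extensions (incomparability for $\le$, lying-over and going-up for $\ge$). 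Your argument is more elementary and more robust: it needs neither the Betti-number result of \cite{Geramita15} nor the Cohen--Macaulayness of $S/I$, and it makes transparent the underlying geometry (a finite morphism $\mathbb{P}^n\to\mathbb{P}^n$ pulls a finite scheme back to a finite scheme). What the paper's approach buys in exchange is the stronger structural conclusion that $R/\varphi_*(I)$ is Cohen--Macaulay and that $\varphi_*(I)$ is unmixed with all associated primes being ideals of points; this is not needed for the lemma as stated (and in any case follows from $0$-dimensionality together with the saturatedness of $\varphi_*(I)$ cited in Lemma~3), but it is the form in which Lemma~2 is implicitly used later. All the individual steps you use --- $IR\cap S=I$ from faithful flatness, integrality from module-finiteness, and preservation of Krull dimension under injective integral extensions --- are standard and correctly applied, so I see no gap.
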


\begin{proof}
We start by showing that $R/\varphi_*(I)$ has the same Krull dimension as $S/I$. By the graded Auslander-Buchsbaum formula, $\text{pd}_S(R/\varphi_*(I))$ $+$ $\text{depth}(R/\varphi_*(I))$ $=$ $\text{depth} (S)$ $=$ $\text{pd}_S(S/I)$ $+$ $\text{depth}(S/I)$. By 3.1 in \cite{Geramita15}, $S/I$ and $R/\varphi_*(I)$ have the same graded Betti numbers so $\text{pd}_S(S/I) = \text{pd}_S(R/\varphi_*(I))$. Therefore $\text{depth}(S/I) = \text{depth}(R/\varphi_*(I))$. By 3.1 in \cite{Geramita15} again, $S/I$ is Cohen-Macaulay (CM) if and only if $R/\varphi_*(I)$ is CM. Since $I$ defines an ideal of points and is saturated, we have that $S/I$ is CM. It follows that $R/\varphi_*(I)$ is CM. For CM modules, the depth is the dimension so that $\dim S/I = \dim R/\varphi_*(I)$. Now since $S/I$ and $R/\varphi_*(I)$ are both CM, $\text{Ass}(R/\varphi_*(I))$ and $\text{Ass}(S/I)$ are both unmixed with their elements having height $\text{ht}(\varphi_*(I))$ and $\text{ht}(I)$ respectively. But $\text{ht}(\varphi_*(I)) = \text{ht}(I)$ since $\dim S/I = \dim R/\varphi_*(I)$. It follows that the elements of $\text{Ass}(R/\varphi_*(I))$ are all ideals of points. It follows that $\varphi_*(I)$ defines a $0$-dimensional subscheme of $\mathbb{P}^n$. 
\end{proof}

\begin{lemma}
Let $I \subseteq S$ be a saturated homogeneous ideal such that the zero locus of $I$ in $\mathbb{P}^n$ is $0$-dimensional. Let $\varphi : S \to R$ be as above. Then $\varphi_*(I^{(m)}) = \varphi_*(I)^{(m)}$. 
\end{lemma}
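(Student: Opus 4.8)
The plan is to show that, in this setting, both $\varphi_*(I^{(m)})$ and $\varphi_*(I)^{(m)}$ are equal to the saturation of an ordinary power of an extended ideal, and then to let the faithful flatness of $R$ over $S$ (Lemma~1) commute that saturation and power past the extension $\varphi_*(-) = (-)R$. First I would reduce symbolic powers to saturated ordinary powers. Since $I$ is saturated with $0$-dimensional zero locus, $\operatorname{Ass}(S/I)$ consists exactly of the height-$n$ homogeneous point primes $\wp_1,\dots,\wp_l$, and the irrelevant ideal $\mathfrak m = (y_0,\dots,y_n)$ is absent. Then every associated prime of $I^m$ contains $\sqrt{I^m} = \wp_1 \cap \cdots \cap \wp_l$, hence contains some $\wp_i$; a homogeneous prime of $k[y_0,\dots,y_n]$ properly containing a height-$n$ prime has height $n+1$, so equals $\mathfrak m$. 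Thus $\operatorname{Ass}(S/I^m) \subseteq \{\wp_1,\dots,\wp_l,\mathfrak m\}$, the only primary component of $I^m$ not attached to a minimal prime is (at most one) $\mathfrak m$-primary component, and deleting it is precisely saturation: $I^{(m)} = S \cap \bigcap_i I^m S_{\wp_i} = I^m : \mathfrak m^\infty$. The identical argument applied to $\varphi_*(I)$, now invoking Lemma~2 (which says $\varphi_*(I)$ is again saturated with $0$-dimensional zero locus, so $\operatorname{Ass}(R/\varphi_*(I))$ consists of height-$n$ point primes and $\mathfrak n = (y_0,\dots,y_n)R$ is not associated), gives $\varphi_*(I)^{(m)} = \varphi_*(I)^m : \mathfrak n^\infty$.

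Next I would record two flatness facts. For a finitely generated ideal $\mathfrak a = (a_1,\dots,a_t) \subseteq S$, applying the exact functor $-\otimes_S R$ to the sequence $0 \to (J : a_i)/J \to S/J \xrightarrow{a_i} S/J$ yields $(J :_S a_i)R = JR :_R a_i$, and intersecting over the finitely many generators gives $(J :_S \mathfrak a)R = JR :_R \mathfrak a R$. Since each $\mathfrak m^k$ is finitely generated with $\mathfrak m^k R = (\mathfrak m R)^k$, and since $J : \mathfrak m^\infty = \bigcup_k (J : \mathfrak m^k)$ is a directed union, which $-\otimes_S R$ preserves, this upgrades to $(J :_S \mathfrak m^\infty)R = JR :_R (\mathfrak m R)^\infty$. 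Separately, $\varphi_*(I^m) = (I^m)R = (IR)^m = \varphi_*(I)^m$ because extension of ideals commutes with products.

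Finally I would combine these using the observation that $\mathfrak m R = \varphi(\mathfrak m)R = (f_0,\dots,f_n)R$ is $\mathfrak n$-primary: the $f_i$ form a homogeneous system of parameters, so $\sqrt{(f_0,\dots,f_n)R} = \mathfrak n$, and hence saturating any ideal of $R$ with respect to $\mathfrak m R$ is the same as saturating it with respect to $\mathfrak n$. The chain
$$\varphi_*(I^{(m)}) = I^{(m)}R = (I^m : \mathfrak m^\infty)R = (I^m R) : (\mathfrak m R)^\infty = \varphi_*(I)^m : \mathfrak n^\infty = \varphi_*(I)^{(m)}$$
then completes the proof.

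I expect the genuine content to be the first reduction: symbolic powers do not commute with flat base change in general, and the argument works here only because $0$-dimensionality together with saturation force the symbolic power to differ from the ordinary power by at most a single component supported at the irrelevant ideal — and Lemma~2 is exactly what guarantees this structure is inherited by $\varphi_*(I)$. A smaller but real technical point is that colon ideals commute with flat extension only for finitely generated ideals, so the infinite saturation must be handled as a directed colimit of honest colon ideals rather than manipulated directly.
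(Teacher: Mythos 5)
Your proof is correct, but it takes a genuinely different route from the paper's. The paper also begins by identifying both symbolic powers with saturations of ordinary powers (using Lemma~2 for $\varphi_*(I)$), but from there it argues degreewise: it shows that $\varphi_*(I^{(m)})$ and $(\varphi_*(I))^{(m)}$ both agree with $(\varphi_*(I))^m$ in all sufficiently large degrees, cites Lemma~3.1 of the Geramita--Harbourne--Migliore--Nagel paper to know that $\varphi_*(I^{(m)})$ is itself saturated, and concludes because two saturated homogeneous ideals agreeing in large degrees are equal. You instead compute the saturation directly through the extension: $I^{(m)} = I^m : \mathfrak m^\infty$, colon by a finitely generated ideal commutes with flat base change, saturation is a directed union of such colons, and $\mathfrak m R = (f_0,\dots,f_n)R$ is $\mathfrak n$-primary because the $f_i$ form a homogeneous system of parameters. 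Your version has two advantages: it is self-contained (it does not need the external fact that $\varphi_*$ preserves saturatedness --- that falls out of your chain, since $\varphi_*(I)^m : \mathfrak n^\infty$ is visibly saturated), and it makes explicit exactly where the ``regular sequence of forms of the same degree'' hypothesis enters, namely in $\sqrt{\mathfrak m R} = \mathfrak n$. It also sidesteps the grading bookkeeping that the paper's ``agree in degree $t$ for $t \gg 0$'' comparison implicitly requires, since $\varphi$ rescales degrees by $\deg f_i$ and one must track the shifts in the free decomposition $R \cong \bigoplus_j S(-a_j)$. The paper's approach, in exchange, is shorter on the commutative algebra and leans on machinery (transfer of graded Betti numbers) already needed for Lemma~2.
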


\begin{proof}
By Lemma 2, $\varphi_*(I)$  is the defining ideal of a $0$-dimensional subscheme so that $(\varphi_*(I))^{(m)} = \text{Sat}((\varphi_*(I))^m)$ where  $\text{Sat}((\varphi_*(I))^m)$ denotes the saturation of the ideal $(\varphi_*(I))^m$. An ideal and its saturation have the same graded homogeneous components for high enough degree so that for $t \gg 0$, $((\varphi_*(I))^{(m)})_t  = ((\varphi_*(I))^m)_t$. 
\\
\\
Using again that the symbolic power of an ideal of a $0$-dimensional subscheme in $\mathbb{P}^n$ is the saturation of the ordinary power, $I^{(m)} = \text{Sat}(I^m)$, we have that $(I^{(m)})_t = (I^m)_t$ for $t \gg 0$. Therefore $(\varphi_*(I^{(m)}))_t = (I^{(m)} \otimes_S R)_t = (I^m \otimes_S R)_t = (\varphi_*(I^m))_t$ for $t \gg 0$. Since $\varphi$ is a ring map, $\varphi_*(I^m) = (\varphi_*(I))^m$. This gives that $(\varphi_*(I^{(m)}))_t =  ((\varphi_*(I))^m)_t$ for $t \gg 0$. 
\\
\\
The last two paragraphs imply that $((\varphi_*(I))^{(m)})_t = \varphi_*(I^{(m)})_t $ for $t \gg 0$. Recall that $(\varphi_*(I))^{(m)}$ is saturated since it is the saturation of $(\varphi_*(I))^{m}$ and $ \varphi_*(I^{(m)})$ is saturated by Lemma 3.1 in \cite{Geramita15}. Two saturated graded homogeneous ideals that agree in degree $t$ for $t \gg 0$, agree in all degrees. Hence $(\varphi_*(I))^{(m)} = \varphi_*(I^{(m)})$.
\end{proof}

\begin{theorem}
Let $I \subseteq S$ be a saturated homogeneous ideal such that $V(I) \subseteq \mathbb{P}^n$ is a $0$-dimensional subscheme. Let $\varphi : S \to R$ be given by $y_i \to f_i$, $0 \leq i \leq n$, where $\{f_0,..., f_n\}$ is an $R$-regular sequence of homogeneous elements of $R$ of the same degree. Let $\varphi_{*}(I)$ denote the ideal in $R$ generated by $\varphi(I)$. Then $I^{(m)} \subseteq I^r$ if and only if $(\varphi_{*}(I))^{(m)} \subseteq (\varphi_{*}(I))^r $. 
\end{theorem}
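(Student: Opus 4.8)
The plan is to deduce the equivalence directly from the faithful flatness of $R$ over $S$ (Lemma~1) together with the identity $\varphi_*(I^{(m)}) = (\varphi_*(I))^{(m)}$ (Lemma~3), so that essentially all the real work has already been done. First I would record two soft preliminary facts. Since $\varphi$ is injective (the $f_i$ form a regular sequence) we identify $S$ with $\varphi(S) = k[f_0,\dots,f_n] \subseteq R$, and then for any homogeneous ideal $J \subseteq S$ the ideal $\varphi_*(J)$ is simply the extension $JR$; moreover, because $R$ is flat over $S$, tensoring the inclusion $J \hookrightarrow S$ with $R$ identifies $J \otimes_S R$ with $JR$, so the exact functor $-\otimes_S R$ carries an inclusion of ideals $J \subseteq K$ in $S$ to the inclusion $JR \subseteq KR$ in $R$. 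Second, since $\varphi$ is a ring homomorphism, generators $g_1\cdots g_r$ of $I^r$ map to generators $\varphi(g_1)\cdots\varphi(g_r)$ of $(\varphi_*(I))^r$, so $\varphi_*(I^r) = (\varphi_*(I))^r$.

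For the forward direction, assume $I^{(m)} \subseteq I^r$. Extending to $R$ (i.e.\ applying $-\otimes_S R$) preserves the inclusion, so $\varphi_*(I^{(m)}) \subseteq \varphi_*(I^r)$. Rewriting the left side by Lemma~3 and the right side by the ring-map identity above yields $(\varphi_*(I))^{(m)} \subseteq (\varphi_*(I))^r$.

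For the converse, assume $(\varphi_*(I))^{(m)} \subseteq (\varphi_*(I))^r$. By the same two rewritings this is exactly $I^{(m)}R \subseteq I^r R$, and now the point is to descend this inclusion along the faithfully flat extension $S \to R$. Here I would invoke the standard fact that for a faithfully flat ring map $S \to R$ one has $JR \cap S = J$ for every ideal $J$ of $S$ (equivalently, $S/J \to (S/J)\otimes_S R = R/JR$ is injective). Intersecting $I^{(m)}R \subseteq I^r R$ with $S$ and applying this to $J = I^r$ gives $I^{(m)} \subseteq I^{(m)}R \cap S \subseteq I^r R \cap S = I^r$, completing the argument.

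I do not anticipate a genuine obstacle, since the substance is packaged into Lemmas~1--3 (in particular the $0$-dimensionality hypothesis enters only through Lemma~3). The one place to be careful is the descent step: it genuinely needs \emph{faithful} flatness, not merely flatness, and one must be sure that $\varphi_*$ of an ideal is literally extension of scalars, so that exactness of $-\otimes_S R$ can be used to transport inclusions across $\varphi$ in both directions.
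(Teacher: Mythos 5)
Your proposal is correct, and the forward direction is essentially identical to the paper's (push the inclusion through $\varphi$, use $\varphi_*(I^r)=(\varphi_*(I))^r$, then apply Lemma~3). Where you genuinely diverge is the converse. The paper does \emph{not} invoke the standard contraction property of faithfully flat maps; instead it argues by contradiction that $\varphi_*$ reflects non-containment of homogeneous ideals: it picks a homogeneous $f\in I^{(m)}\setminus I^r$, reduces to the principal ideal $(f)$, tensors the exact sequence $0\to (f)\cap J\to (f)\oplus J\to (f)+J\to 0$ with the flat module $R$, deduces that a containment $\varphi_*((f))\subseteq\varphi_*(J)$ would force $\varphi_*((f)\cap J)=\varphi_*((f))$, and then rules this out by comparing degrees of generators (every homogeneous element of $(f)\cap J$ has degree strictly greater than $\deg f$ since $f\notin J$). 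Your route --- identifying $\varphi_*(J)$ with the extension $JR$ and citing $JR\cap S=J$ for the faithfully flat inclusion $S\cong k[f_0,\dots,f_n]\hookrightarrow R$ --- is shorter, is the standard descent argument, and does not use homogeneity at all, so it reflects non-containment for arbitrary ideals; the paper's argument is more self-contained but leans on the graded structure in an essential way. Both ultimately rest on Lemma~1 (faithful flatness) and Lemma~3, and the $0$-dimensionality hypothesis enters only through Lemma~3 in either version, as you note. Your proof is a valid, arguably cleaner, substitute.
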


\begin{proof}

$(\implies)$ Suppose that $I^{(m)} \subseteq I^r$. Then $\varphi(I^{(m)}) \subseteq \varphi(I^r)$ and so $\varphi_{*}(I^{(m)}) \subseteq \varphi_{*}(I^r)$. Since $\varphi$ is a homomorphism, $\varphi(I^r) = (\varphi(I))^r$. Note that $\varphi(I^r)$ generates $\varphi_{*}(I^r)$ in $R$ and $ (\varphi(I))^r$ generates $ (\varphi_{*}(I))^r$ in $R$. It follows that $\varphi_{*}(I^r) =  (\varphi_{*}(I))^r$ since they have the same generating set. Now applying Lemma 3 we have that $(\varphi_*(I))^{(m)} = \varphi_*(I^{(m)}) \subseteq \varphi_*(I^r) = \varphi_*(I)^r$ concluding the forward direction.
\\
\\
$(\Longleftarrow)$ Suppose now that for some homogeneous ideals $I$ and $J$ of $S$, $I \not \subseteq J$ but $\varphi_*(I) \subseteq \varphi_*(J)$. Then there is a homogeneous element $f \in I \backslash J$ such that $\varphi(f) \in \varphi_{*}(J)$. We may assume with no loss in generality that $I = (f)$. We have the sequence $$0 \to I \cap J \to I \oplus J \to I + J \to 0$$ with the first map given by $g \mapsto (g, -g)$ and the second map given by $(h, r) \mapsto h+r$.  It is clear that the sequence is exact. Since $\varphi$ is faithfully flat, we get an exact sequence $$0 \to \varphi_{*}(I \cap J) \to \varphi_{*}(I) \oplus \varphi_{*}(J) \to \varphi_{*}(I + J) \to 0.$$ Since $\varphi_{*}(I) \subseteq \varphi_{*}(J)$, $\varphi_{*}(I+J) = \varphi_{*}(J)$. Then the map $\varphi_{*}(I) \oplus \varphi_{*}(J) \to \varphi_{*}(J)$ has kernel $\varphi_{*}(I)$. It follows that $\varphi_{*}(I \cap J) = \varphi_{*}(I)$. This is impossible since the generators of $\varphi_{*}(I \cap J)$ are the images of the generators of $I \cap J$ and thus have degree greater than degree $f$ and hence greater than degree of $\varphi(f)$ which generates $\varphi_*(I) = I \otimes_S R \neq 0$. 
\\
\\
So it is the case that $\varphi(f) \notin \varphi_{*}(J)$. Hence $\varphi_{*}(I) \not \subseteq \varphi_{*}(J)$. Therefore if $I^{(m)} \not \subseteq I^r$, then by Lemma 3, $(\varphi_{*}(I))^{(m)} = \varphi_*(I^{(m)}) \not \subseteq (\varphi_{*}(I))^r$. Hence $(\varphi_{*}(I))^{(m)} \subseteq (\varphi_{*}(I))^r$ if and only if $I^{(m)} \subseteq I^r$. 
\end{proof}

\section{Examples}

\noindent Using the above result, we obtain many new counterexamples to the containment $I^{(3)} \subseteq I^2$ of ideals in $k[\mathbb{P}^2]$ and more generally counterexamples to the containment \begin{equation}\label{containment} I^{(nr-n+1)} \subseteq I^r \tag{$\star$} \end{equation} in $\mathbb{P}^n$. In particular if $I \subseteq k[\mathbb{P}^n]$ gives a counterexample to \eqref{containment}, then $\varphi_*(I)$  is a counterexample for any choice of homogeneous regular sequence $\{f_0,...,f_n\}$ of elements of the same degree. We illustrate this below with a few examples.

\begin{example}\rm
\noindent In this example, we work over $\mathbb{C}$. In \cite{DST}, the Fermat configuration, for $n=3$, was considered and its ideal $I = (x(y^3-z^3), y(x^3-z^3), z(x^3-y^3)) \subseteq \mathbb{C}[x,y,z]$ was found to be a counterexample to the containment $I^{(3)} \subseteq I^2$. Recall the configuration consists of the $3$ coordinate vertices and the $9$ intersection points of $y^3 - z^3$ and $x^3 - z^3$. The ideal $I$ is radical and all of the points in the configuration are reduced points. Now let $\varphi : \mathbb{C}[\mathbb{P}^2] \to \mathbb{C}[\mathbb{P}^2]$ by $x \to f = x^2 + y^2$, $y \to g = y^2 + z^2$ and $z \to h = x^2 + z^2$. One easily checks that $\{x^2 + y^2, y^2 + z^2, x^2 + z^2\}$ is a $\mathbb{C}[\mathbb{P}^2]$ - regular sequence. Then $\varphi$ induces a map of schemes $\varphi^{\#} : \mathbb{P}^2 \to \mathbb{P}^2$ which is faithfully flat. Consider the scheme-theoretic fibers of $\varphi^{\#}$ over the Fermat configuration and call it the fibered Fermat configuration. Note that the fibered Fermat configuration is $0$-dimensional. Since $\varphi^{\#}$ has degree $4$, the fibers consist of $48$ points of $\mathbb{P}^2$ where we count with multiplicity. The fibered Fermat configuration gives rise to the radical ideal $\varphi_*(I) = (f(g^3 - h^3), g(f^3 - h^3), h(f^3 - g^3)) \subseteq \mathbb{C}[\mathbb{P}^2]$ and by analyzing the ideal we see that the configuration consists of $4$ multiplicity $1$ points over each of the $3$ coordinate vertices, given by $f = 0 = g$, $f = 0 = h$ and $g = 0 = h$. The remaining $36$ points, each of multiplicity 1, in the configuration are the zero locus of $f^3 - h^3$ and $f^3 - g^3$. Since $I^{(3)} \not \subseteq I^2$, we have by Theorem 3 that $\varphi_*(I)^{(3)} \not \subseteq \varphi_*(I)^2$. 
\end{example}

\begin{example}\rm
\noindent We give another example of a fibered Fermat configuration whose ideal also gives a counterexample to the containment $I^{(3)} \subseteq I^2$. The difference here is that $36$ of the points in the configuration have multiplicity $1$ while the remaining $3$ points each has multiplicity $4$. So there are still $48$ points counting with multiplicity. Let $\varphi : \mathbb{C}[\mathbb{P}^2] \to \mathbb{C}[\mathbb{P}^2]$ by $x \to f = x^2$, $y \to g = y^2$ and $z \to h = z^2$. This faithfully flat ring map induces a morphism of schemes $\varphi^{\#} : \mathbb{P}^2 \to \mathbb{P}^2$ that is also flat. The fibers of $\varphi^{\#}$ over the Fermat configuration gives the fibered Fermat configuration that consists of the $36$ points, each of multiplicity $1$, of intersection of the degree $6$ forms $f^3 - g^3$ and $g^3 - h^3$. The configuration has $3$ more points each of multiplicity $4$ over the $3$ coordinate points. They are the zero loci of $f = 0 = g$, $f = 0 = h$ and $g = 0 = h$. So the fibered Fermat configuration here has points that are not all reduced. By Theorem 3, its nonradical ideal $\varphi_*(I)$ is a counterexample to the containment $\varphi_*(I)^{(3)} \subseteq \varphi_*(I)^2$.
\end{example}

\begin{example} \rm
\noindent Similarly for the Fermat configurations considered in \cite{Harbourne-Seceleanu15} for $n \geq 3$, we can construct new configurations of points, that may or may not be reduced in $\mathbb{P}^2$, that are the fibers of a morphism of schemes $\varphi^{\#}: \mathbb{P}^2 \to \mathbb{P}^2$. The morphism $\varphi^{\#}$ is induced by the ring map $\varphi: \mathbb{C}[\mathbb{P}^2] \to \mathbb{C}[\mathbb{P}^2]$ given by $x \to f$, $y \to g$ and $z \to h$ where $\{f,g,h\}$ is a homogeneous $\mathbb{C}[\mathbb{P}^2]$-regular sequence of the same degree. The Fermat configuration gives rise to a radical ideal $I = (x(y^j - z^j), y(x^j - z^j), z(x^j - y^j)) \subseteq \mathbb{C}[\mathbb{P}^2]$, $j \geq 3$, and for a choice of $\{f,g,h\}$, the fibered Fermat configuration gives rise to an ideal $\varphi_*(I) = (f(g^j - h^j), g(f^j - h^j), h(f^j - g^j))$, $j \geq 3$, not necessarily radical, that is also a counterexample to $\varphi_*(I)^{(3)} \subseteq \varphi_*(I)^2$. Here the Fermat configuration consists of the reduced $j^2$ points of intersection of $y^j - z^j$ and $x^j - y^j$ together with the $3$ coordinate vertices for a total of $j^2 + 3$ points. If the degree of the homogeneous elements in $\{f,g,h\}$ is $d$, then the fibered configuration consists of the $d^2j^2$ points of intersection of $g^j - h^j$ and $f^j - h^j$ together with the $3d^2$ fiber points over the three coordinate vertices that are the solutions of the three equations $f = 0 = g$, $f = 0 = h$ and $g = 0 = h$, counted with multiplicity. Again the points in the fibered configuration may or may not be reduced. 
\end{example}

\begin{example} \rm
\noindent Now we consider an example given in \cite{BCH} that is inspired by the example of the Fermat configuration. Let $k = \mathbb{Z}/3\mathbb{Z}$ and let $K$ be an algebraically closed field containing $k$. Note that $\mathbb{P}^2_{K}$ has $13$ $k$-points and $13$ $k$-lines such that each line contains $4$ of the points and each point is incident to $4$ of the lines. The forms $xy(x^2 - y^2)$, $xz(x^2 - z^2)$ and $yz(y^2 - z^2)$ vanish at all $13$ points of $\mathbb{P}^2_k$ but the form $x(x^2 - y^2)(x^2 - z^2)$ does not vanish at the point $[1:0:0]$. One checks easily that the ideal $I = (xy(x^2 - y^2), xz(x^2 - z^2), yz(y^2-z^2), x(x^2 - y^2)(x^2 - z^2)) \subseteq k[\mathbb{P}^2_{K}]$ is radical and its zero locus is the $13$ $k$-points of $\mathbb{P}^2_{K}$. Then $F = x(x-z)(x+z)(x^2 - y^2)((x- z)^2 - y^2)((x+z)^2 - y^2)$ defines $9$ lines meeting at $12$ points with each point incident to $3$ of the lines. It is not hard to see that $F \in I^{(3)}$ but $F \notin I^2$. So the reduced configuration that comes from $\mathbb{P}^2_k$ with the point $[1:0:0]$ removed together with all its incident lines gives rise to an ideal that is a counterexample to the containment $I^{(3)} \subseteq I^2$. Let $\varphi : k[\mathbb{P}^2_{K}] \to k[\mathbb{P}^2_{K}]$ be the ring map $x \to f = x^2$, $y \to g = y^2$ and $z \to h = z^2$. Applying the degree $4$ morphism of schemes $\varphi^{\#} : \mathbb{P}^2_{K} \to \mathbb{P}^2_{K}$, induced by $\varphi$,  and taking its fibers over the $k$-points, we get a configuration of $48$ points. For each point in the original configuration, we get $4$ points in the fibered configuration. The points in this new configuration are not all reduced. For instance over the point $[0:0:1]$, the fiber of $\varphi^{\#}$ is a point of multiplicity $4$ in $\mathbb{P}^2_{K}$ given by the vanishing of $y^2$ and $x^2$. The ideal of the fibered configuration as schemes is the ideal $\varphi_*(I) = (fg(f^2 - g^2), fh(f^2 - h^2), gh(g^2 - h^2), f(f^2 - g^2)(f^2 - h^2))$. This ideal is not radical and since $\{f,g,h\} \subset \mathbb{P}^2_{K}$ is a regular sequence, we have by Theorem 3 that $\varphi_*(I)^{(3)} \not \subseteq \varphi_*(I)^2$.  
If instead we take $f = x^2 + y^2$, $g = y^2 + z^2$ and $h = x^2 + z^2$ in the above example, then the fibered configuration we obtain is a reduced configuration and the ideal $\varphi_*(I)$ is a radical ideal satisfying $\varphi_*(I)^{(3)} \not \subseteq \varphi_*(I)^2$. 
\\
\\
\noindent Variations of the above example are considered in $\mathbb{P}^n$ for various $n$ in \cite{Harbourne-Seceleanu15}, giving counterexamples for the more general conjecture $I^{(nr - n +1)} \subseteq I^r$. We can apply our result to these to obtain new counterexamples to the more general containment.  

\end{example}

\bibliographystyle{plain}
\bibliography{IdealContainments}

\end{document}